\title{From the random geometry of conformally invariant systems to the K\"ahler geometry of universal Teichm\"uller space \\
\textit{\small submitted for Notices of the American Mathematical Society}}
\newcolumntype{P}[1]{>{\centering\arraybackslash}p{#1}}
\setlist[enumerate]{topsep = 1ex, leftmargin=.6cm, itemsep= -3pt}
\setlist[itemize]{topsep = 1ex, leftmargin=.6cm, itemsep= -3pt}
\let\OLDthebibliography\thebibliography
\renewcommand\thebibliography[1]{
  \OLDthebibliography{#1}
  \setlength{\parskip}{1pt}
  \setlength{\itemsep}{2pt}
}
\newtheorem{thm}{Theorem}
\newtheorem{cor}[thm]{Corollary}
\newtheorem{df}[thm]{Definition}
\theoremstyle{definition}
\global\long\def\ii{\mathfrak{i}}
\newcommand{\abs}[1]{\left\lvert #1 \right \rvert}
\newcommand{\brac}[1]{\left \langle #1 \right \rangle}
\newcommand{\mc}[1]{\mathcal{#1}}
\newcommand{\m}[1]{\mathbb{#1}}
\def\ie{i.e.,\,}
\renewcommand\Re{\operatorname{Re}}
\renewcommand\Im{\operatorname{Im}}
\def\SLE{\operatorname{SLE}}
\def\mob{\mathrm{M\ddot{o}b}}
\def\Diff{\operatorname{Diff}}
\def\QS{\operatorname{QS}}
\def\Vect{\operatorname{Vect}}
\def\a{\alpha}
\def\g{\gamma}
\def\D{\Delta}
\def\t{\theta}
\def\l{\lambda}
\def\k{\kappa}
\def\S{\Sigma}
\def\o{\omega}
\def\O{\Omega}
\def\vare{\varepsilon}
\def\Chat{\hat{\m{C}}}
\def\dd{\mathrm{d}}
\def\vol{\mathrm{vol}}
\newcommand{\ad}[1]{\overline{#1}}
\def\detz{\mathrm{det}_{\zeta}}
\def\P{\m P}
\def\1{\mathbf{1}}
 \newcommand{\splus}{{\scriptstyle +}}
 \def \1{\mathbf{1}}
\def\Id{\operatorname{Id}}
\author{Yilin Wang \\
\emph{Institut des Hautes \'Etudes Scientifiques}\\
\protect\url{yilin@ihes.fr}}
\begin{document}

 \maketitle

Two-dimensional random conformal geometry studies conformally invariant random systems in the plane by combining techniques in complex analysis and probability. 
One of the central notions --- the Schramm--Loewner evolution (SLE) --- is a one-parameter family of random fractal curves without self-crossings.  
For some parameter values, SLE curves are scaling limits of interfaces in two-dimensional critical lattice models. Without reference to discrete models, the family of SLE curves can be defined directly in the continuum and is uniquely characterized by two seemingly weak properties: conformal invariance and domain Markov property.
The fact that there is only one free parameter in the definition of SLE is rooted in the universality of Brownian motion, one of the most fundamental concepts in stochastic analysis. 

We will explain how a fundamental functional on the space of Jordan curves arising from SLE --- Loewner energy --- is connected to a seemingly far apart subject: the K\"ahler geometry of universal Teichm\"uller space. 

Universal Teichm\"uller space is an infinite-dimensional complex Banach manifold which contains all Teichm\"uller spaces of hyperbolic surfaces. There are several equivalent ways to describe universal Teichm\"uller space, one of which is by identifying it with a particular family of Jordan curves --- quasicircles.   Universal Teichm\"uller space is endowed with many geometric structures; in particular, it has an essentially unique homogeneous K\"ahler metric. None of those geometric structures seems to be directly related to stochastic processes. However, its K\"ahler potential, defined on the connected component of the circle (by definition, the Weil--Petersson Teichm\"uller space) and called the universal Liouville action, surprisingly coincides with the Loewner energy.

The fundamental reason for this identity, which is hard to believe to be mere coincidence, is still largely mysterious. This article aims to provide a background on Loewner energy, the universal Liouville action, and the intuition behind the proof of the identity. 
We will also discuss the implications of this link and the connections between Loewner energy and other mathematical subjects.  Due to space limitation, we are not able to give complete references. 
Interested readers are invited to consult the survey \cite{yilin_survey} 
for more references on the background materials.


\section*{Conformally invariant simple random curves in the plane}

Scaling limits of critical lattice models provide several important examples of conformally invariant random self-avoiding curves.  Examples include loop-erased random walk, spin cluster interfaces in the critical Ising model, and the level line of a discrete Gaussian free field. We would like to describe what lattice size scaling limits of such curves may look like. 
Physicists predict that conformal invariance emerge for a wide-range of well-chosen statistical mechanics models \cite{BPZ}. 
For instance, the simple random walk in $\delta \m Z^2$ converges in the scaling limit (when $\delta \to 0$) to the planar Brownian motion, whose trajectory is indeed conformally invariant up to reparametrization.
However, since we deal with self-avoiding paths, any scaling limit should still be non-self-crossing, so cannot be a Markov process. 
(The future path of a Markov process only depends on the current location. For instance, the trajectory of a Brownian motion in $\m R^2$ almost surely hits itself infinitely many times on arbitrarily small time interval.)

By considering properties expected from the discrete models as predicted by physics, O.~Schramm formulated a mathematical description of random simple curves with conformal symmetries \cite{Schramm2000}.
It turns out to be convenient to first assume that the random simple curve connects two distinct boundary points (prime ends) $a, b$ in a simply connected domain $D \subsetneq \m C$. We call such a simple curve a \emph{chord} in $(D;a,b)$ and oriented from $a$ to $b$. This setup will allow us to progressively slit open the curve starting from $a$, and so that the remaining part of the curve will be a chord connecting the endpoint of the slit and $b$ in the slit domain. 

One advantage of working in two dimensions is that there is a rich family of conformal maps, given by biholomorphic functions (so we may use single-variable complex analysis). In particular, the Riemann mapping theorem states that there exists a conformal map $\varphi$ sending the simply connected domain $D$ to the upper half-plane $\m H = \{z \in \m C \colon \Im (z) > 0\}$, which maps $a$ to $0 \in \partial \m H = \m R \cup \{\infty\}$ and $b$ to $\infty \in \partial \m H$. Another choice of such conformal map has to be of the form $\lambda \varphi$ for some $\lambda > 0$.

Schramm realized that the random simple chord has to satisfy two properties:  
\begin{itemize}
    \item \emph{Conformal invariance:}  The random chord  $\g$ in $\m H$ connecting $0$ to $\infty$ should have the same law as its image under the map $z \mapsto \lambda z$ for every $\l > 0$. This then allows us to define the random chord in $(D;a,b)$ as the preimage under any $\varphi : D \to \m H$.
    \item \emph{Domain Markov property:} Chords of $(\m H; 0, \infty)$ have a parametrization by capacity (explained below). If we slit the random chord $\g$ in $(\m H; 0,\infty)$ from $\g(0) = 0$ up to $\g(s)$ and map conformally $(\m H \smallsetminus \g [0,s]; \g (s), \infty)$ to $(\m H; 0, \infty)$, then the image of $\g [s,\infty)$ should have the same law as $\g$ and be independent of $\g [0,s]$, for every $s > 0$. 
\end{itemize}

\begin{figure}[ht]
 \centering
 \includegraphics[width=0.8\textwidth]{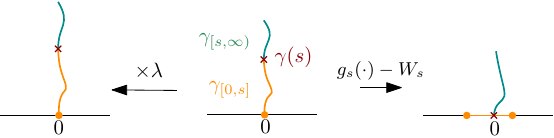}
 \caption{ 
 The left arrow illustrates a scaling map, and the right arrow illustrates the uniformizing conformal map from the slit domain onto the upper half-plane. The law of the random chord should be identical in these three pictures. \label{fig:scaling}}  
 \end{figure}

Indeed, many scaling limits of the interfaces in critical lattice models are proved to satisfy these axioms, e.g., \cite{LSW04LERWUST,Smi:ICM}.
However, we do not need to invoke discrete models if we use these two properties as the axiomatic definition of the random chord of interest in the continuum. Schramm noticed that the Loewner transform \cite{Loewner23} provides a perfect tool to describe it. The \emph{Loewner transform} encodes any given chord in $(\m H; 0, \infty)$ into a continuous real-valued function. More precisely, we say that $\g$ is \emph{parametrized by capacity} if the unique conformal map 
$g_t : \m H \smallsetminus \g [0,t] \to \m H$,  normalized such that the expansion at infinity is given by $g_t (z) = z + o(1)$,  has in fact the next term in the expansion  being $2t/z$, namely, $$g_t (z) = z + \frac{2t}{z} + o \Big( \frac1z \Big). $$
The image of the tip  $\g(t)$ of $\g[0,t]$ under $g_t$ is denoted as $W_t \in \m R$. 
Let $T \in (0,\infty]$ be the total capacity of $\g$. 
As $t$ varies in $[0,T)$, $t \mapsto W_t$ is continuous and starting from $W_0 = 0$. We call $W$ the \emph{Loewner driving function} of $\g$. Moreover, for each $z \in \m H$, $t\mapsto g_t(z)$ satisfies a simple differential equation $\partial_t g_t (z) = 2/(g_t (z) - W_t)$ with the initial condition $g_0 (z) = z$. This allows us to recover $\g$ from $W$.

It is a simple exercise to check that the two properties of the random simple chord above translate into the following properties of the random continuous driving function $W$:
\begin{itemize}
    \item  $W$ has the same law as the function $t \mapsto \l W_{\l^{-2} t}$, for every $\lambda > 0$; 
    \item for every $s > 0$, $t  \mapsto W_{t +s} - W_s$ has the same law as $W$ and is independent of $W|_{[0,s]}$.
\end{itemize}
The first condition implies that $T = \infty$ almost surely.
It turns out the only possible random continuous function satisfying these properties is of the form of $\sqrt \k B$, where $B$ is the standard Brownian motion and $\k \ge 0$. In fact, the second property guarantees $W$ to be a continuous L\'evy process. The classification of continuous L\'evy processes tells us that it is of the form $t\mapsto \sqrt \k B_t +  a t$, for some $a \in \m R$.  One may view the classification as the manifestation of two most fundamental theorems in probability theory: the law of large numbers (which explains the occurrence of the deterministic drift $t \mapsto a t$), central limit theorem (which explains the occurrence of the Gaussian process $\sqrt k B$).
Since Brownian motion satisfies the first property which transforms the drift $at$ into $\l^{-1} at$, it implies that $\lambda^{-1}a = a$ which shows $a = 0$.

\emph{Schramm--Loewner evolution} $\SLE_\k$ in $(\m H; 0, \infty)$ is the random curve whose driving function is $\sqrt \k B$, for some $\k \ge 0$.
 More precisely, when $\k \le 4$, SLE$_\k$ is indeed the random simple chord with driving function $\sqrt \k B$ in the sense described above; for $\k > 4$, the Loewner driving function does not define a growing slit $\g [0,t]$ but a growing compact set $K_t \subset \ad{\m H}$. In this case, SLE$_\k$ is referred to as the curve which carves out progressively the boundary of $K_t$ (when $\k \ge 8$, SLE$_\k$ is a random space-filling curve) \cite{Rohde_Schramm}. 
We will only consider the case where $\k \le 4$ and do not enter into further details to discuss the $\k > 4$ case. The SLE$_\k$ in another domain $(D;a,b)$ is defined as the preimage of $\SLE_\k$ in $(\m H; 0,\infty)$ under any conformal map $\varphi : D \to \m H$ sending respectively $a, b$ to $0,\infty$. 

 From the discussion above, SLE curves are the only possible non-self-crossing curves appearing in conformally invariant systems which satisfy conformal invariance and domain Markov property.
 The study of SLE curves and their variants, as well as the connections to random surfaces, discrete models, and conformal field theory has grown into a thriving field of two-dimensional random conformal geometry. 
 We refer the reader to the textbooks, e.g.,~\cite{WW_St_Flour}, which provide detailed introductions to SLEs and applications. 


\section*{Loewner energy and SLE}
We will slowly move away from the probability world by only looking at a deterministic functional that arises from large deviation principles of SLE. 
Interested readers may consult \cite{yilin_survey} for a more complete survey on this topic. 

Large deviation principles capture the exponential decay of the probability of rare events. A classical example is the large deviation principle of the Brownian motion: Schilder's theorem states that the probability of $\sqrt \k B$ staying close to a given deterministic real-valued function $W$ decays exponentially fast  as $\k \to 0\splus$ (when $W$ is not the zero function):
$$\lim_{\vare \to 0} \lim_{\k \to 0} - \kappa \log \m P (\sqrt \k B \text{ stays $\vare$-close to } W) =  I(W),$$
where $I(\cdot)$ is called
the \emph{large deviation rate function} of $(\sqrt \k B)_{\k \to 0\splus}$  and is given by the Dirichlet energy
$$I(W) : = \frac12 \int_0^\infty \dot W_t^2 \,\dd t, $$
where $\dot W_t : = \dd W_t/\dd t$ if $W$ is absolutely continuous, and $I(W) = \infty$ otherwise. Here, by $\vare$-close we mean being in a ball of radius $\vare$ with respect to a metric on $C^0([0,\infty))$ inducing uniform convergence on compact subsets. 

\begin{figure}[ht]
    \centering
    \includegraphics[width=0.75\textwidth]{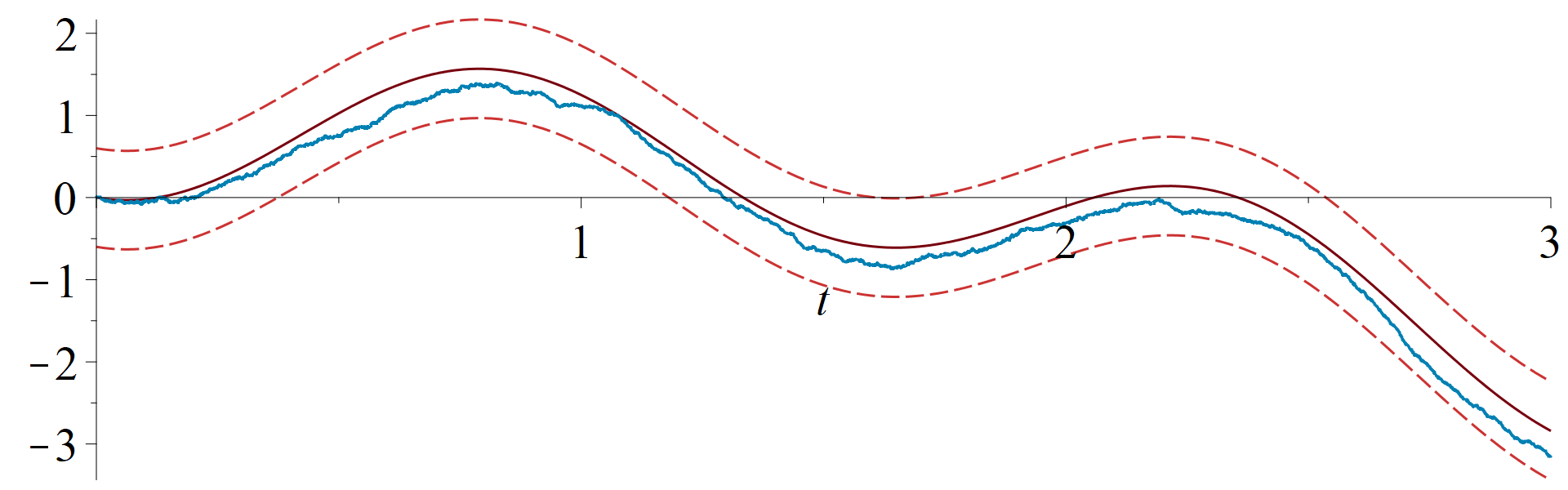}
    \caption{An illustration of the rare event of $\sqrt \k B$ being close to a deterministic function $W$ (whose graph is the solid red line). The blue curve is a simulation of $\sqrt \k B$ with $\kappa = 0.1$ over $10000$ steps conditioned to stay close to $W$.}
    \label{fig:bm}
\end{figure}

Given Schilder's theorem, it is not surprising that using an appropriate metric, e.g., the Hausdorff metric, 
a similar large deviation principle holds for $\SLE_{0+}$: given a chord $\gamma$ in the domain $(D; a,b)$,
\begin{equation}\label{eq:ldp_sle}
\lim_{\vare \to 0} \lim_{\k \to 0} - \k \log \m P (\SLE_\k \text{ in } (D; a,b) \text{ stays $\vare$-close to } \g) = I_{D;a,b}^C(\g). 
\end{equation}
Here 
$I^C_{D;a,b}(\g) : = I(W)$ is called the \emph{chordal Loewner energy} of $\g$, where $W$ is the driving function of $\varphi (\g)$ and $\varphi$ is any conformal map sending $(D; a,b)$ onto $(\m H; 0, \infty)$.  

In a collaboration with Rohde, we generalized the Loewner energy to Jordan curves (simple loops) and realized that it shows more symmetries in this setup \cite{RW}. We will focus on the energy for Jordan curves from now and denote it as $I^L$. The energy for Jordan curve generalizes the chordal energy because of the following property: 
\begin{equation}\label{eq:chord_loop}
    I^L(\g \cup \m R_+) =  I^C_{\m C \smallsetminus \m R_+; 0, \infty} (\g) 
\end{equation}
for every simple chord $\g$ in $(\m C \smallsetminus \m R_+; 0, \infty)$.
The Loewner energy for a Jordan curve is defined as follows.
\begin{df}
 Let $\g: [0,1] \to \Chat = \m C \cup \{\infty\}$ be a continuously parametrized Jordan curve with $\g (0) = \g(1)$. 
For every $\vare>0$, $\g [\vare, 1]$ is a chord connecting $\g(\vare)$ to $\g (1)$ in the simply connected domain $\Chat \smallsetminus \g[0, \vare]$.
    The \emph{Loewner energy} of $\g$ rooted at $\g(0)$ is 
\begin{equation}\label{def:Loewner_energy}
I^L(\g, \g(0)): = \lim_{\vare \to 0} I^C_{\Chat \smallsetminus \g[0, \vare]; \g(\vare), \g(1)} (\g[\vare, 1]).
\end{equation}
\end{df}
In fact, rather surprisingly, the definition does not depend on the choice of the orientation of the curve nor on its root \cite{RW}. 
Therefore, we omit the root from the notation and simply call $I^L(\g)$ the \emph{Loewner energy} of $\g$.
 The relation \eqref{eq:chord_loop} can be seen by putting the root at $\infty$ and orient the curve from $\infty \to 0$ along $\m R_+$.
The independence of the Loewner energy from the parametrization is not obvious, since the chordal energies $I^C$ are defined using the Loewner driving function which depends strongly on the past of the curve. 
This independence suggests that there must be an intrinsic expression of the Loewner energy which does not use any particular parametrization of the Jordan curve.
The answer is given by the identity with the universal Liouville action that will be the subject of the next section. 

 We remark that since the Loewner energy is defined via uniformizing maps (to define the driving function), it is invariant under conformal automorphisms of $\Chat$ (\ie M\"obius transformations $z \mapsto (az +b)/(cz+d)$). 
Moreover, the Loewner energy is zero if and only if $\g$ is a circle on $\Chat$. 
Therefore, the Loewner energy may be viewed as a quantity measuring the roundness of the unparametrized Jordan curve.

The loop energy is, as one may guess, related to the loop version of SLE$_\kappa$. Indeed, the \emph{SLE$_\kappa$ loop measure} $\mu^\kappa$ was constructed in \cite{zhan2020sleloop}, and in this case, the link to the Loewner energy actually holds without letting $\kappa$ go to $0$. A recent work \cite{carfagnini_wang} shows that for a fixed $\kappa \le 4$,
\begin{align}\label{eqn.OM}
\lim_{\varepsilon \rightarrow 0} \frac{\mu^{\kappa}  \left( O_{\varepsilon} (\gamma )\right) }{\mu^{\kappa}  \left( O_{\varepsilon} (S^{1} )\right) } = \exp \left( \frac{c(\kappa)}{24}I^{L}(\gamma) \right),
\end{align}
where $c(\kappa):= (6-\kappa)(3\kappa-8)/2\kappa$ is the \emph{central charge} (terminology coming from the connection to  conformal field theory) of $\SLE_{\kappa}$, $S^1$ is the unit circle, and $O_{\varepsilon}$ is a certain $\varepsilon$-neighborhood of $\gamma$ in the space of loops. In other words, $(c(\kappa)/24) I^L$ is the \emph{Onsager--Machlup action functional}  of the SLE$_\kappa$ loop measure. On may also check that as $\kappa \to 0\splus$, $c(\kappa) \sim -24/\kappa$ and we recover the asymptotics similar to \eqref{eq:ldp_sle}.

\section*{Identity with the Universal Liouville action}

We will focus on the Loewner energy for Jordan curves (and forget about its relation to SLE for a moment).  The following theorem gives an equivalent expression the Loewner energy which does not use the Loewner transform.  Since the Loewner energy is invariant under M\"obius transformations,  without loss of generality, we may assume that the Jordan curve $\g$ does not pass through $\infty$. 

\begin{thm}[See {\cite[Thm.\,1.4]{W2}}]\label{thm:intro_equiv_energy_WP} \label{thm:main}
Let $\O$ \textnormal(resp., $\O^*$\textnormal) denote the component of $\Chat \smallsetminus \g$ which does not contain $\infty$ \textnormal(resp., which contains $\infty$\textnormal) and $f$ \textnormal(resp., $g$\textnormal) be a conformal map from the unit disk $\m D = \{z \in \Chat \colon |z| < 1 \}$ onto $\O$ \textnormal(resp., from $\m D^* = \{z \in \Chat \colon |z| >1 \}$ onto $\O^*$\textnormal). We assume further that $g (\infty) = \infty$.
The Loewner energy of $\gamma$ can be expressed as
       \begin{equation} \label{eq_disk_energy}
   I^L(\gamma) = \frac{1}{\pi} \int_{\m D} \abs{\frac{f''}{f'}}^2 \dd^2 z + \frac{1}{\pi} \int_{\m D^*} \abs{\frac{g''}{g'}}^2 \dd^2 z +4 \log \abs{\frac{f'(0)}{g'(\infty)}} =:  \frac{\mathbf{S} (\g)}{\pi} ,
 \end{equation}
 where $g'(\infty):=\lim_{z\to \infty} g'(z)$ and $\dd^2 z$ is the Euclidean area measure.
\end{thm}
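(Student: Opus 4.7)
My approach is to reduce the loop identity to a chordal counterpart via the defining limit in \eqref{def:Loewner_energy}, then match boundary contributions as the slit parameter $\varepsilon$ shrinks to zero. Both sides of \eqref{eq_disk_energy} should admit a natural chordal avatar, and the additive term $4\log|f'(0)/g'(\infty)|$ ought to emerge from the mismatch between the conformal normalization at the tip of the slit and the normalizations of $f$ at $0$ and $g$ at $\infty$.

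The first main step is the chordal version. For a chord $\eta$ in $(\m H;0,\infty)$ parametrized by capacity with driving function $W$ and uniformizing maps $g_t:\m H\smallsetminus\eta[0,t]\to \m H$, the Loewner equation yields
\[ \partial_t \log g_t'(z) = -\frac{2}{(g_t(z)-W_t)^2}. \]
Taking real parts, $u_t := \log|g_t'|$ is harmonic on $\m H \smallsetminus \eta[0,t]$ with boundary behaviour governed by $W_t$. An application of Green's identity together with the explicit $\partial_t u_t$ should rewrite $\tfrac12 \int_0^\infty \dot W_t^2\, dt$ as a surface Dirichlet integral of $u_t$; since for a conformal map $h$ one has $|\nabla\log|h'||^2 = |h''/h'|^2$, this expresses $I^C_{\m H;0,\infty}(\eta)$ as an integral of $|\psi''/\psi'|^2$ for a conformal map $\psi$ from a half-plane onto $\m H \smallsetminus\eta$.

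With the chordal identity in hand, I would apply it (via conformal invariance of $I^C$) to the chord $\gamma[\varepsilon,1]$ in $\Chat \smallsetminus \gamma[0,\varepsilon]$ appearing in \eqref{def:Loewner_energy}. Splitting the surface integral along the curve and pulling back via $f:\m D\to\Omega$ and $g:\m D^*\to\Omega^*$ yields, after letting $\varepsilon\to 0$, the two bulk integrals of \eqref{eq_disk_energy}. The main obstacle is identifying the remaining additive constant $4\log|f'(0)/g'(\infty)|$: it encodes, through Koebe-type distortion estimates and careful expansions of the uniformizing maps at the shrinking slit, the comparison between the conformal radii of $\Omega$ and $\Omega^*$ viewed from $\gamma(0)$ and the normalization choices made for $f$ at $0$ and $g$ at $\infty$. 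Controlling this logarithmic boundary term uniformly in $\varepsilon$, while justifying the passage to the limit in both the bulk and the boundary contributions (one may exploit the monotonicity $\varepsilon \mapsto I^C(\gamma[\varepsilon,1])$ coming from additivity of the chordal energy), is where I expect the argument to require the most care.
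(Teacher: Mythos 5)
Your skeleton (first a chordal/half-plane identity coming from the Loewner flow, then a limit over the slit parameter in \eqref{def:Loewner_energy}) superficially parallels the paper, but both of its halves have genuine gaps. For the chordal step, the computation you sketch does not yet constitute a proof: the formula $\partial_t\log g_t'(z)=-2/(g_t(z)-W_t)^2$ contains no $\dot W_t$ at all, so ``Green's identity plus the explicit $\partial_t u_t$'' cannot by itself convert $\tfrac12\int_0^\infty \dot W_t^2\,\dd t$ into a Dirichlet integral; one must work with the inverse maps $f_t=g_t^{-1}$, differentiate the (a priori only conditionally convergent) energy $\int|\nabla\log|f_t'||^2$ along the flow, and control boundary terms at the tip, at $W_t$ and at $\infty$. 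Moreover your target statement is misstated: $\m H\smallsetminus\eta$ has two components, and the correct identity (Theorem~\ref{thm:infinite_curve_LE} combined with \eqref{eq:chord_loop}) involves the maps onto \emph{both} components with a specific normalization at $\infty$; since the individual integrals $\int|h''/h'|^2$ are not M\"obius invariant, the choice of normalization is part of the content. The paper avoids these difficulties by proving the half-plane identity through explicit computation for linear drivers, concatenation for piecewise linear drivers, and an approximation argument for general finite-energy $W$, followed by the limit $M\to\infty$.

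The more serious gap is the passage from the half-plane identity to the disk identity \eqref{eq_disk_energy}. Applying the chordal identity to $\gamma[\vare,1]$ in $\Chat\smallsetminus\gamma[0,\vare]$ requires first sending the slit to a ray through $\infty$, and under that M\"obius change of uniformization the two bulk integrals are \emph{not} invariant: the discrepancy is exactly what produces the constant $4\log|f'(0)/g'(\infty)|$, and near the root the maps of the slit complement degenerate as $\vare\to0$, so the error terms are not controlled by generic Koebe distortion bounds; the monotonicity of $\vare\mapsto I^C$ (which is indeed true, by additivity of $I(W)$) says nothing about this constant or about convergence of the right-hand side. The paper replaces this entire step by a different mechanism: the conformally invariant determinant functional $\mc H$ of \eqref{eq:def_H}, two applications of the Polyakov--Alvarez anomaly formula (Theorem~\ref{thm_energy_determinant}) to compare half-plane and disk uniformizations for \emph{smooth} curves, and finally an approximation of an arbitrary bounded Jordan curve by smooth ones. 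As it stands, your proposal offers no concrete substitute for this transfer step, which is where the identity with the universal Liouville action is actually established.
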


The quantity $\mathbf{S}$ was introduced in \cite{TT06} under the name \emph{universal Liouville action}. Its value does not depend on the choice of $f$ and $g$ as long as $g (\infty) = \infty$.
A Jordan curve for which $\int_{\m D} |f''/f'|^2 \,\dd^2 z$ is finite is called a \emph{Weil--Petersson quasicircle}. It turns out that $\int_{\m D} |f''/f'|^2 \,\dd^2 z$ is finite if and only if $\int_{\m D^*} |g''/g'|^2 \,\dd^2 z$ is finite. Hence, we have:
\begin{cor}
    A Jordan curve has finite Loewner energy if and only if it is a Weil--Petersson quasicircle.
\end{cor}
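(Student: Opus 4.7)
The plan is to extract the corollary directly from Theorem~\ref{thm:main}, treating the stated equivalence of the two Dirichlet integrals as a black box. By the theorem,
\[
I^L(\gamma) = A + B + C,
\]
where $A := \tfrac{1}{\pi}\int_{\mathbb{D}} |f''/f'|^2\,d^2z$, $B := \tfrac{1}{\pi}\int_{\mathbb{D}^*} |g''/g'|^2\,d^2z$, and $C := 4\log|f'(0)/g'(\infty)|$. My task reduces to comparing the finiteness of this sum with the finiteness of $A$ alone, since the definition of a Weil--Petersson quasicircle recalled just after the theorem is exactly $A<\infty$.

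First I would check that $C$ is automatically a finite real number. Since $f\colon \mathbb{D} \to \Omega$ is conformal, hence univalent, $f'(0)$ is a non-zero complex number. Similarly, $g\colon \mathbb{D}^* \to \Omega^*$ is conformal with $g(\infty)=\infty$, so its Laurent expansion at infinity reads $g(z) = g'(\infty)\,z + O(1)$ with $g'(\infty)\neq 0$. Hence $|f'(0)/g'(\infty)| \in (0,\infty)$ and $C \in \mathbb{R}$.

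The second step is purely formal: since $A,B\geq 0$ and $C$ is finite, $I^L(\gamma) = A+B+C$ is finite if and only if both $A$ and $B$ are finite. Invoking the equivalence $A<\infty \iff B<\infty$ stated just before the corollary, this reduces to $A<\infty$, which is by definition the Weil--Petersson condition. Both implications of the corollary follow at once.

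The only non-trivial ingredient is the cross-domain equivalence $\int_{\mathbb{D}} |f''/f'|^2\,d^2z < \infty \iff \int_{\mathbb{D}^*} |g''/g'|^2\,d^2z < \infty$, which would be the main obstacle if one had to prove it from scratch. This is a classical but delicate fact about the Weil--Petersson class of quasicircles, going back to Takhtajan--Teo and Shen; it is typically established by relating both pre-Schwarzian Dirichlet integrals to a common third invariant (such as the Schwarzian derivative combined with the conformal welding of $\gamma$). Within the present article, however, it is imported as a known fact, so the corollary itself is essentially a one-line consequence of Theorem~\ref{thm:main}.
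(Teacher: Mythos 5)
Your proposal is correct and matches the paper's (implicit) argument: the corollary is read off from Theorem~\ref{thm:main} by noting that the two Dirichlet-type integrals are nonnegative, the log term is a finite real number, and the stated equivalence of finiteness of the $\mathbb{D}$ and $\mathbb{D}^*$ integrals reduces everything to the Weil--Petersson condition. No gap.
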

We note that Weil--Petersson quasicircles are rectifiable. Therefore, finite Loewner energy curves are more regular than SLE$_\k$ curves, which have Hausdorff dimension $(1 + \k /8) \wedge 2$. 
This is not surprising as functions with finite Dirichlet energy are also more regular than a Brownian path which has infinite Dirichlet energy almost surely. 

As mentioned above, the universal Liouville action does not depend on any special point on the curve $\g$ 
and has the advantage of involving only two conformal maps. Whereas to define the Loewner energy through the driving function, one has to study the whole family of uniformizing conformal mappings of the slit domains. 
However, the way of considering the Jordan curve as a progressively growing slit (which closes up on itself) allows us to relate the Loewner energy to SLE curves.

\section*{Weil--Petersson Teichm\"uller space}

The universal Liouville action arises from a very different context --- Teichm\"uller theory.  Teichm\"uller spaces were introduced by Teichm\"uller to parametrize the family of complex structures on a surface using quasiconformal mappings. In particular, the Teichm\"uller space of a genus $g \ge 2$ closed surface is homeomorphic to  $\m R^{6g - 6}$. 
The universal Teichm\"uller space is infinite-dimensional and contains all Teichm\"uller spaces of surfaces of negative Euler characteristics, hence the name \emph{universal}, which can also be represented by the class of quasicircles. 

More precisely, we first identify a Jordan curve $\g$ with a homeomorphism of the unit circle $S^1 = \partial \m D = \partial \m D^*$ as follows. 
By Carath\'eodory's theorem, any conformal map $f : \m D \to \O$ (resp., $g : \m D^* \to \O^*$) extends continuously to a homeomorphism between the closures $\ad {\m D} \to \ad {\O}$ (resp., $\ad {\m D^*} \to \ad {\O^*}$). In particular, $f$ and $g$ restricted to $S^1$ define two homeomorphisms $S^1 \to \g$. The \emph{welding homeomorphism},  defined as the circle homeomorphism $\varphi : = g^{-1} \circ f|_{S^1}$, compares these two homeomorphisms.

The converse operation --- solving the conformal welding problem --- consists of finding a Jordan curve $\g$ and corresponding conformal maps $f$ and $g$, whose welding homeomorphism $g^{-1} \circ f|_{S^1}$ is a given circle homeomorphism $\varphi$. 
We note that if $\g$ is a solution, then $A \circ \g$ is also a solution (by replacing $f$ by $A \circ f$ and $g$ by $A \circ g$), where $A$ is any M\"obius transformation of $\Chat$. 
A solution may not exist, and if one exists, it may not be unique (up to post-composition by M\"obius transformations). 
The complete characterization of circle homeomorphisms for which there is a unique solution to conformal welding is a hard open question: 
Bishop showed that those corresponding Jordan curves form a set which is \emph{not} Borel measurable.  

However, it is classical 
that if the circle homeomorphism is \emph{quasisymmetric}, then the solution to the conformal welding problem exists and is unique up to post-composition by M\"obius transformations.
We use
$$\medmath{\QS(S^1): = \left \{\varphi \in \operatorname{Hom}(S^1) \colon \exists M > 1, \, \forall \t \in \m R, \, \forall t \in (0,\pi), \,  \frac1M \le \abs{\frac{\varphi(e^{\ii (\t + t)}) -\varphi( e^{\ii \t})}{\varphi(e^{\ii \t} )- \varphi(e^{\ii (\t-t)})}} \le M \right\}}$$ 
to denote the group of quasisymmetric circle homeomorphisms.   The corresponding Jordan curves are called \emph{quasicircles}. 

Let $\varphi \in \QS (S^1)$ and consider its solution to the welding problem.
We fix a normalization by assuming that the conformal map $f$ satisfies $f(0) = 0, f'(0) = 1$ and $f''(0) = 0$ and put no condition on $g$ (except that $g (\m D^* ) = \Chat \smallsetminus \overline{f(\m D)}$). 
In other words, we consider the welding homeomorphism to be in the homogeneous space $\mob (S^1) \backslash \QS(S^1)$, where $\mob (S^1)$ is the group of M\"obius transformations preserving $S^1$ (since $g$ can be replaced by $g \circ B$ for any $B \in \mob (S^1)$).
The homogeneous space $T(1) : = \mob (S^1) \backslash \QS(S^1)$ is called the \emph{universal Teichm\"uller space}. 

Universal Teichm\"uller space has an infinite dimensional complex Banach manifold structure such that 
the group $\QS(S^1)$ acts on the right on $T(1)$ holomorphically. One wonders whether it can be further equipped with a \emph{K\"ahler metric}, namely, a symplectic form $\o (\cdot, \cdot)$ and Riemannian metric $\brac{\cdot, \cdot}$ that are invariant under the right action and compatible with the complex structure (encoded in an operator $J$ on the tangent bundle such that $J^2 = -\Id$ which plays the role of ``multiplication by $\ii$''). Being compatible means $\o (\cdot, J (\cdot)) = \brac{\cdot, \cdot}$.
This question was motivated by string theory, e.g., \cite{BowickRajeev1987string}, who consider only the \emph{smooth part} $\mob (S^1) \backslash \Diff^\infty (S^1)$ of the universal Teichm\"uller space without having to  worry about any convergence issue on this infinite dimensional manifold. 
It turns out there is a \emph{unique} K\"ahler metric up to a scaling factor.


Let us explain briefly  how to derive this K\"ahler metric (also ignoring the convergence question).
Concretely, the tangent space at $[\Id] \in T(1)$ consists of real vector fields Vect$(S^1)$ on $S^1$ with Fourier expansion: 
$$v = \sum_{n \neq \pm 1, 0} v_n e^{\ii n\t} \frac{ \partial}{\partial \t}  = \sum_{n \neq \pm 1, 0} v_n e_n \quad \text{ satisfying } \ad v_n = v_{-n} \in \m C, $$
where $e_n :=  e^{\ii n\t}  \partial/ \partial \t $.
We have omitted the Fourier modes for $n \in \{\pm 1, 0\}$ as they generate the Lie algebra of $\mob (S^1)$.

The {\em almost complex structure} $J: \text{Vect}(S^1) \to \text{Vect}(S^1)$ (such that $J^2 = -\Id$) induced from the complex structure is given by the Hilbert transform:
$$J v =  \ii \sum_{n = 2}^{\infty} v_n e_n - \ii \sum_{n = -\infty}^{-2} v_n e_n.$$
 
The $\m C$-basis $\{e_n\}_{n \neq \pm 1, 0}$ of the space of complexified vector fields  
has the Lie bracket
$$[e_m, e_n] =  \ii (n - m) e_{n+m}. $$

\begin{thm}[See {\cite{BowickRajeev1987string}}]\label{thm:unique_WP}
  Up to a scaling factor, there is a unique homogeneous K\"ahler metric on $\mob (S^1) \backslash \Diff^\infty (S^1)$. The symplectic form  
  is given by  
  $$\o (u, v) = -\o (v, u) =   - \a \Im  \left (\sum_{n = 2}^{\infty}  (n^3 - n) u_n \ad {v_n} \right), \qquad \forall u,v \in  \text{Vect}(S^1)$$
for some $\a \in \m R_+$.
The symmetric $2$-tensor $\brac{\cdot, \cdot}$ compatible with $\o$ and $J$
\begin{align*}
\brac{u, v} :=  \o (u, J v) =   \a\Re \left( \sum_{n = 2}^{\infty}  (n^3 - n) u_n \ad{v_n} \right)
\end{align*}
is positive and definite. This metric is called the \emph{Weil--Petersson metric}. 
\end{thm}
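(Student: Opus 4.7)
The plan is to reduce the uniqueness statement to a finite-dimensional algebraic problem at the tangent space at $[\Id] \in \mob(S^1)\backslash \Diff^\infty(S^1)$, identified with $\Vect(S^1)/\mathfrak{mob}(S^1)$ and spanned over $\m C$ by $\{e_n\}_{n \neq 0,\pm 1}$. A $\Diff^\infty(S^1)$-invariant $2$-form is determined by an $\mathrm{Ad}(\mob(S^1))$-invariant antisymmetric form at this point, and invariance under the isotropy subgroup translates infinitesimally to
\[ \omega([e_k, u], v) + \omega(u, [e_k, v]) = 0, \qquad k \in \{-1, 0, 1\}. \]
I will then separately impose compatibility with $J$ and closedness.

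First I would use rotation ($k = 0$) invariance: since $[e_0, e_n] = \ii n\, e_n$, one gets $(m+n)\omega(e_m, e_n) = 0$, so only the diagonal pairings $\kappa_n := \omega(e_n, e_{-n})$ for $n \geq 2$ can be nonzero, and reality of $\omega$ on real vector fields forces each $\kappa_n$ to be purely imaginary. Next I would test the $k = 1$ identity on the pair $(e_n, e_{-n-1})$: with $[e_1, e_n] = \ii(n-1) e_{n+1}$ and $[e_1, e_{-n-1}] = -\ii(n+2) e_{-n}$, only the diagonal terms contribute and I obtain the recursion
\[ (n-1)\, \kappa_{n+1} = (n+2)\, \kappa_n, \qquad n \geq 2, \]
whose unique solution up to a scalar is $\kappa_n \propto n(n-1)(n+1) = n^3 - n$. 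Reassembling the Fourier data recovers the claimed formula for $\omega$, and $k = -1$ invariance follows automatically by complex conjugation.

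Next I would verify the remaining Kähler conditions. Because $J$ acts as $+\ii$ on $e_n$ for $n \geq 2$ and as $-\ii$ on $e_{-n}$, and $\omega$ pairs only modes of opposite weight, the compatibility $\omega(Ju, Jv) = \omega(u, v)$ is immediate, and
\[ \langle u, v \rangle = \omega(u, Jv) = \alpha \Re \sum_{n \geq 2} (n^3 - n)\, u_n \overline{v_n} \]
is symmetric and, since $n^3 - n > 0$ for $n \geq 2$, positive definite as soon as $\alpha > 0$. Finally, closedness $d\omega = 0$ at the base point reduces by invariance to the Lie-algebra cocycle identity
\[ \omega([e_k, e_m], e_n) + \omega([e_m, e_n], e_k) + \omega([e_n, e_k], e_m) = 0, \]
which is nontrivial only when $k + m + n = 0$ and becomes a direct cubic polynomial identity there; this is precisely the Gelfand--Fuks cocycle underlying the Virasoro central extension.

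The main obstacle is isolating, among the many redundant invariance equations produced by the $e_0, e_{\pm 1}$ tests, the single pairing $(e_n, e_{-n-1})$ that gives a sharp two-term recursion, and recognizing $n^3 - n$ as its unique solution. The remaining subtlety is analytic: ensuring that this left-invariant tensor defines a bona fide Kähler form on the infinite-dimensional manifold $\mob(S^1)\backslash\Diff^\infty(S^1)$, a convergence issue the paper explicitly brackets aside at the smooth level.
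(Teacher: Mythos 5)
Your proposal is correct and follows essentially the same route as the paper: homogeneity reduces everything to a Lie-algebraic computation on the modes $\{e_n\}_{n\neq 0,\pm1}$, only the opposite-mode pairings $\omega(e_n,e_{-n})$ survive, a single two-term recursion $(n-1)\kappa_{n+1}=(n+2)\kappa_n$ forces $\kappa_n\propto n^3-n$, and reality plus positive-definiteness fix the constant to be $\ii\a$ with $\a>0$. The only difference is one of packaging: you obtain the two constraints from $\mathrm{Ad}(\mob(S^1))$-invariance (the $k=0$ and $k=1$ tests) and then verify closedness separately via the Virasoro (Gelfand--Fuks) cocycle identity, whereas the paper derives exactly the same two equations as the $p=0$ and $(n,p)=(1,-m-1)$ specializations of the cocycle identity coming from $\dd\o=0$ together with homogeneity and the kernel condition; your variant has the minor merit of making the existence (closedness) check explicit, which the paper leaves to the cited reference.
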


\begin{proof}
Assume that $\o$ is a homogeneous symplectic form. Since $\o$ is closed, we have for all $m,n,p \in \m Z$,
\begin{align*}
0= \dd \o  (e_m, e_n, e_p) & =   e_m (\o (e_n, e_p)) + e_n (\o (e_p, e_m))+ e_p (\o (e_m, e_n))\\
&\quad - \o ([e_m, e_n], e_p) - \o ([e_n, e_p], e_m) - \o ([e_p, e_m], e_n).    
\end{align*}
By homogeneity, the first three terms on the right-hand side vanish and we have 
\begin{equation}\label{eq:jacobi}
    \o ([e_m, e_n], e_p) + \o ([e_n, e_p], e_m) + \o ([e_p, e_m], e_n))= 0.
\end{equation}
Moreover, $\o$ has kernel spanned by $e_{-1}, e_{0}$ and $e_1$. 
From these constraints we can determine $\o$ as follows.

By taking $p = 0$, \eqref{eq:jacobi} gives that 
$(n+m) \o(e_m,e_n) = 0$.
 Therefore $\o(e_m, e_n) = 0$ when $m \neq -n $. 
 We let 
 $a_m : = \o (e_m, e_{-m}).$
 Take $p = -m -1$, $n = 1$, \eqref{eq:jacobi} gives
 $$ (1- m)a_{m+1} + (m+2) a_m = 0$$
which implies there exists $\a \in \m C$ such that $a_m = \ii \a (m^3 - m)/2$ for all $m \ge 2$ (hence, for all $m \in \m Z$).
 
 When $u, v \in \Vect(S^1)$,  we have $u_{-m} = \ad{u_m}$ and $v_{-m} = \ad{v_m}$. Therefore,
\begin{align*}
\o(u,v) = &  \frac{\ii \a}{2}  \sum_{ m\neq {\pm 1 , 0}} (m^3 - m) u_m v_{-m}
  =  - \a \Im  \left (\sum_{m = 2}^{\infty}  (m^3 - m) u_m \ad {v_m} \right)
\end{align*}
and the compatible symmetric tensor
\begin{align*}
\brac{u, v} : =  \o (u, J v) 
=  \a\Re \left( \sum_{m = 2}^{\infty}  (m^3 - m) u_m \ad{v_m} \right).
\end{align*}
 We obtain $\a > 0$ from the assumption that
$\brac{\cdot, \cdot}$
 is positive definite.
\end{proof}

Takhtajan and Teo \cite{TT06} extended the infinite-dimensional K\"ahler manifold structure and the Weil--Petersson metric to $T(1)$. 
In fact, the subspace of $u \in \text{Vect}(S^1)$ such that $\brac{u,u} <\infty$ coincides with the  $H^{3/2}$ Sobolev space of vector fields (which is strictly smaller than the tangent space of $T(1)$ which is given by the space of Zygmund vector fields). 
\begin{thm}[See {\cite{TT06}}]
The image of $H^{3/2}$ space of vector fields under the right action by $\QS(S^1)$ on $T(1)$ defines a tangent subbundle.
    The connected component $T_0(1)$ of the associated integral manifold containing $[\Id] \in T(1)$ --- called \emph{the Weil--Petersson Teichm\"uller space} --- is a complete, infinite-dimensional K\"ahler--Einstein manifold with negative curvatures. Moreover, $[\varphi] \in T_0(1)$ if and only if $\varphi = g^{-1} \circ f|_{S^1}$ where $\int_{\m D} |f''/f'|^2 \,\dd^2 z <\infty$. 
\end{thm}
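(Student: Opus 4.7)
The plan is to translate the group-theoretic statement into an analytic statement about quadratic differentials on the disk via the Bers embedding. Recall that the Bers embedding $\beta: T(1) \to A_\infty(\m D^*)$ sends $[\varphi = g^{-1}\circ f|_{S^1}]$ to the Schwarzian derivative $S(g) = (g''/g')' - (1/2)(g''/g')^2$, and realizes $T(1)$ as an open neighborhood of $0$ in the complex Banach space of holomorphic quadratic differentials on $\m D^*$ bounded in the hyperbolic sup norm. In particular the tangent space at each point can be canonically identified with $A_\infty(\m D^*)$, and the right $\QS(S^1)$-action is holomorphic.

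First I would show that the differential of $\beta$ at $[\Id]$ identifies the $H^{3/2}$ subspace of $\Vect(S^1)$ with the hyperbolic Bergman space $A^2(\m D^*)$ of $L^2$ quadratic differentials. Concretely, if $v = \sum v_n e_n$, a direct Fourier-series computation gives that $\int_{\m D^*}|d\beta_{[\Id]}(v)|^2 \rho^{-2}\,\dd^2 z$ is a constant multiple of $\sum_{n \ge 2}(n^3 - n)|v_n|^2$, matching the Weil--Petersson metric $\brac{v,v}$ from Theorem~\ref{thm:unique_WP}. Translating this identification by the holomorphic right action of $\QS(S^1)$ produces a closed complemented subbundle of $TT(1)$; an application of the Frobenius theorem in the Banach setting yields the corresponding integral manifold, and $T_0(1)$ is defined as the connected component through $[\Id]$.

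To characterize $T_0(1)$ via pre-Schwarzians, one must show that the hyperbolic $L^2$ integrability of $S(g)$ on $\m D^*$ is equivalent to $\int_{\m D} |f''/f'|^2 \,\dd^2 z < \infty$. This is a purely analytic statement about univalent functions: expanding the Schwarzian via $S(f) = (f''/f')' - (1/2)(f''/f')^2$, applying an integration by parts, and controlling the error terms via Koebe distortion estimates converts the Bergman norm of the Schwarzian into the Dirichlet energy of $\log f'$. The symmetry between $f$ and $g$ in the welding then shows that this condition is equivalent to the analogous one on $\m D^*$, as already noted before the statement of the corollary preceding this theorem.

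The main obstacle is the K\"ahler--Einstein property together with completeness and negative curvature. The natural candidate for a global K\"ahler potential is $-\log \det(\Id - G\bar G)$, where $G$ is the Grunsky operator associated with $\varphi$; its second variation at $[\Id]$ recovers the Weil--Petersson metric from Theorem~\ref{thm:unique_WP}, and its equivariance under the right action extends it to all of $T_0(1)$. Computing the Ricci form via standard K\"ahler identities then reduces the Einstein property to verifying a Monge--Amp\`ere-type identity satisfied by this potential on $T_0(1)$. Completeness follows from realizing $T_0(1)$ as a Hilbert manifold modelled on $A^2(\m D^*)$, and negative sectional curvatures are obtained by an explicit computation of the curvature tensor in Bers coordinates, ultimately reducing to sign estimates on integrals involving the Bergman reproducing kernel.
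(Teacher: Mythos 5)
This statement is quoted by the paper from Takhtajan--Teo \cite{TT06}; the article itself gives no proof, so your sketch has to be measured against their argument, whose skeleton you do broadly reproduce: Bers embedding, identification of the $H^{3/2}$ vector fields at $[\Id]$ with the hyperbolic Bergman space $A^2(\m D^*)$ of quadratic differentials via the Fourier computation matching $\sum_{n\ge 2}(n^3-n)|v_n|^2$, transport by the right action, and the pre-Schwarzian characterization of $T_0(1)$. Two caveats on that part. First, in \cite{TT06} the ``tangent subbundle and integral manifold'' structure is obtained by writing down an explicit Hilbert-manifold atlas (right translates of Bers coordinates, with biholomorphic transition maps), not by an abstract Frobenius argument; smoothness and involutivity of your translated distribution is precisely what such an atlas establishes, so invoking Frobenius begs the question. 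Second, the equivalence between the $A^2(\m D^*)$ condition on the exterior map $g$ and $\int_{\m D}|f''/f'|^2\,\dd^2 z<\infty$ for the interior map $f$ is not a formal ``symmetry of the welding''; passing between the two sides is itself a theorem (quasiconformal reflection), the same nontrivial fact the paper records just before its Corollary.

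The genuine gaps are in your last paragraph. Completeness does not ``follow from realizing $T_0(1)$ as a Hilbert manifold modelled on $A^2(\m D^*)$'': a Riemannian Hilbert manifold need not be metrically complete, and the actual proof requires quantitative comparison of the Weil--Petersson distance with the $A^2$ norms in Bers charts. More seriously, the K\"ahler--Einstein property cannot be reduced, as you propose, to a ``Monge--Amp\`ere-type identity'' for the candidate potential $-\log\det(\Id-G^*G)$: a potential for $\o$ gives the K\"ahler property, but the Einstein condition concerns the Ricci form, and in this infinite-dimensional setting the Ricci form is not the naive $-\ii\partial\bar\partial\log\det$ of the metric (no such determinant exists). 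One must first prove that the relevant curvature operators are of trace class, so that the Ricci tensor is even defined, and then compute it from the curvature tensor, obtaining a negative constant multiple of $\o_{\WP}$; negative sectional curvature is a further separate computation. Your sketch never addresses the trace-class issue, which is the heart of the matter --- without it the phrase ``K\"ahler--Einstein'' is not well formulated, and the ``sign estimates on integrals involving the Bergman reproducing kernel'' remain a placeholder rather than an argument.
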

So, $T_0(1)$  is the completion of $\mob (S^1)\backslash \Diff^\infty(S^1)$. Moreover, a Jordan curve is associated with an element in $T_0(1)$ via conformal welding if and only if it is a Weil--Petersson quasicircle. 
There are many equivalent characterizations of the elements in $T_0(1)$, see, e.g., \cite{Bishop_WP} for an extensive summary including several geometric characterizations. 
One special feature of K\"ahler metrics is that they always admit locally defined K\"ahler potential. Here, the Weil--Petersson metric has a globally defined potential given by the universal Liouville action:
 
\begin{thm}[See {\cite[Cor.\,II.4.2]{TT06}}]
    The universal Liouville action ${\mathbf S} 
    : T_0(1) \to \m R_+$ is a K\"ahler potential for the Weil--Petersson metric. That is, up to a positive scaling factor,
    $$\ii \partial \bar\partial {\mathbf S} = \o, $$
    where $\o$ is the symplectic form in Theorem~\ref{thm:unique_WP} and $\partial$ and $\bar \partial$ are the $(1,0)$ and $(0,1)$ components of de Rham differential $\dd$ on the complex manifold $T_0(1)$.
\end{thm}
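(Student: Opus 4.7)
The plan is to compute the complex Hessian $\ii \partial \bar\partial \mathbf{S}$ at the basepoint $[\Id] \in T_0(1)$ directly, match it with the expression for $\omega$ given in Theorem~\ref{thm:unique_WP} up to a positive constant, and then propagate the identity to every point of $T_0(1)$ by a right-invariance argument.

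For the first step, I parametrize a neighborhood of $[\Id]$ by complexified Zygmund vector fields $v = \sum_{n \geq 2} v_n e_n$ (the $(1,0)$-part of a real tangent vector). Extending $v$ to a Beltrami differential supported on one side of $S^1$ and invoking Ahlfors--Bers produces a holomorphic one-parameter family of Weil--Petersson quasicircles $\g_t$ through $\g_0 = S^1$, together with normalized conformal maps $f_t$ on $\m D$ (with $f_t(0) = 0$, $f_t'(0) = 1$, $f_t''(0) = 0$) and $g_t$ on $\m D^*$ (with $g_t(\infty) = \infty$). At $t = 0$, $f_0 = g_0 = \Id$, so all three terms in \eqref{eq_disk_energy} vanish; a quick inspection shows the first variation of $\mathbf{S}$ vanishes as well, since the area integrands are quadratic in the small quantities $f_t''/f_t'$, $g_t''/g_t'$, and the normalizations kill the linear contribution of the log term.

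Next I expand $\mathbf{S}(\g_t) = |t|^2\, H(v, \bar v) + O(|t|^3)$, where the Hermitian form $H$ reads
\begin{equation*}
H(v, \bar v) = \frac{1}{\pi}\int_{\m D} |\dot f''|^2 \,\dd^2 z + \frac{1}{\pi} \int_{\m D^*} |\dot g''|^2 \,\dd^2 z, \qquad \dot f := \partial_t f_t|_{t=0}, \ \dot g := \partial_t g_t|_{t=0}.
\end{equation*}
Because $\dot f$ and $\dot g$ are holomorphic in their respective disks, Stokes' theorem reduces these area integrals to $H^{3/2}$-type norms of the boundary values. Differentiating the welding identity $g_t \circ \varphi_t = f_t$ at $t = 0$ gives $v = \dot f|_{S^1} - \dot g|_{S^1}$, so Fourier decomposition pairs the positive modes of $v$ with $\dot f$ and the negative modes with $-\dot g$. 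A direct computation yields $H(v,\bar v) = c \sum_{n \geq 2}(n^3 - n)|v_n|^2$ for an explicit $c > 0$, matching $\brac{\cdot,\cdot}$ from Theorem~\ref{thm:unique_WP}. Extracting the $(1,1)$-component gives $\ii \partial \bar\partial \mathbf{S}|_{[\Id]} = c\, \omega|_{[\Id]}$. To extend to all of $T_0(1)$, I use that $\omega$ is right-$\QS(S^1)$-invariant by construction; although $\mathbf{S}$ itself is not right-invariant, the form $\partial \bar\partial \mathbf{S}$ is, which follows from a cocycle property expressing $\mathbf{S}$ under right composition by a fixed quasisymmetric map as the sum of $\mathbf{S}$ and a pluriharmonic error annihilated by $\partial \bar\partial$. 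Two right-invariant $(1,1)$-forms agreeing at one point agree everywhere.

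The main obstacle is the Fourier computation in the second step: one has to propagate the three normalizations $f_t(0) = 0, f_t'(0) = 1, f_t''(0) = 0$ (which amount to killing the Fourier modes $n \in \{-1, 0, 1\}$ that form the kernel of $\omega$) consistently through the variational calculus, and recover the precise coefficient $(n^3 - n)$ rather than merely $n^3$. The $-n$ correction encodes the \Mo normalization and is the most delicate part of the calculation. The right-invariance of $\partial \bar\partial \mathbf{S}$, though less computationally heavy, still requires setting up the correct cocycle identity for $\mathbf{S}$ under composition of weldings.
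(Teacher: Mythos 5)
The paper itself offers no proof of this statement: it is quoted from Takhtajan--Teo \cite{TT06}, whose argument is global rather than base-point-plus-invariance. They compute the first variation $\partial {\mathbf S}$ at an \emph{arbitrary} point of $T_0(1)$, using Ahlfors-type variational formulas for $f$ and $g$ under quasiconformal deformation, identify $\partial{\mathbf S}$ with an explicit $(1,0)$-form, and then verify that its $\bar\partial$-derivative is the Weil--Petersson form pointwise. Your computation of the complex Hessian at $[\Id]$ is consistent with the known second variation of ${\mathbf S}$ at the circle and could be carried out, but it only reproduces the statement at one point.

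The decisive gap is your globalization step. You deduce right-invariance of $\partial\bar\partial{\mathbf S}$ from a ``cocycle property'' asserting that ${\mathbf S}\circ R_\psi-{\mathbf S}$ is pluriharmonic for each fixed Weil--Petersson-class $\psi$, but you neither prove nor cite this, and it is not an elementary fact: granted that $\omega$ is right-invariant and granted your Hessian computation at $[\Id]$, this pluriharmonicity is logically equivalent to the theorem itself (if $\ii\partial\bar\partial{\mathbf S}=c\,\omega$ then the difference under right translation is pluriharmonic, and conversely). There is no simple composition law for ${\mathbf S}$ under right translation --- the welding of $\varphi\circ\psi$ requires solving a new welding problem --- so establishing the needed identity is essentially as hard as the global first-variation computation that Takhtajan--Teo actually perform; as written, your argument assumes the crux. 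Secondary issues: the extraction of the $(1,1)$-Hessian requires knowing ${\mathbf S}$ is at least $C^2$ on $T_0(1)$ (it is in fact real-analytic, by \cite{TT06}); your Hermitian form silently discards the second-order contribution of $4\log\abs{f'(0)/g'(\infty)}$ and the $(2,0)+(0,2)$ parts, which is legitimate only because the deformation family can be chosen holomorphic in $t$ so that these terms are pluriharmonic --- the stated reason, that ``the normalizations kill the linear contribution of the log term,'' is not accurate since $g'(\infty)$ is not normalized; and the right-invariance of $\omega$ on all of $T_0(1)$ (not merely the Lie-algebra-level homogeneity of Theorem~\ref{thm:unique_WP}) must itself be quoted from \cite{TT06}.
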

This shows all information about the unique K\"ahler metric on $T_0(1)$ is recorded in the universal Liouville action, hence, the Loewner energy.



\section*{How did we come up with the identity?}
We now explain how we could come up with the identity between the Loewner energy  and the universal Liouville action in Theorem~\ref{thm:main}  using ideas from random conformal geometry. 
 The  first step to guess the following identity 
 for a Jordan curve passing through $\infty$. 
 
\begin{thm}[See {\cite[Thm.\,1.1]{W2}}]\label{thm:infinite_curve_LE}
    If $\gamma$ is a Jordan curve passing through $\infty$, then
\begin{align*}
  I^L(\gamma) & =  \frac{1}{\pi} \int_{\m H} |\nabla \log \abs{f'}|^2 \dd^2 z +  \frac{1}{\pi} \int_{\m H^*} |\nabla \log \abs{g'}|^2 \dd^2 z  \\
   & = \frac{1}{\pi} \int_{\m H} \abs{\frac{f''}{f'}}^2 \dd^2 z +  \frac{1}{\pi} \int_{\m H^*} \abs{\frac{g''}{g'}}^2  \dd^2 z
\end{align*}
    where $f$ and $g$ map conformally the upper half-plane $\m H$ and the lower half-plane $\m H^*$ onto $H$ and $H^*$, the two components of $\mathbb{C} \smallsetminus \gamma$ respectively, while fixing $\infty$. 
\end{thm}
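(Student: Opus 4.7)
The second equality is immediate: $f'$ is holomorphic and non-vanishing on $\m H$, so $F:=\log f'$ is holomorphic, hence $\log|f'|=\Re F$ is harmonic and $|\nabla\log|f'||^2 = |F'|^2 = |f''/f'|^2$; the same computation applies to $g$ on $\m H^*$. So the main task is the first equality.

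For this, my plan is to reduce to the chordal Loewner transform and derive a dissipation identity for the area integrals. By M\"obius invariance and root-independence of $I^L$, pick a root $p \in \gamma \smallsetminus \{\infty\}$ and parametrize $\gamma\smallsetminus\{p\}$ by half-plane capacity after mapping conformally to $\m H$, obtaining a driving function $W_t$ on $[0,T)$ so that $I^L(\gamma) = \tfrac12\int_0^T \dot W_t^2 \, \dd t$ in the limit $\vare \to 0$ of \eqref{def:Loewner_energy}. Let $g_t$ be the hydrodynamic uniformizing maps of the slit domains and $\psi_t := g_t^{-1}$. Setting
\[
\mc E(t) := \frac{1}{\pi}\int_{\m H}\left|\frac{\psi_t''}{\psi_t'}\right|^2 \dd^2 z + (\text{analogous term for the complementary side}),
\]
the goal is to show $\mc E'(t) = \dot W_t^2$. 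Differentiating $g_t \circ \psi_t = \Id$ gives $\partial_t \psi_t = -2\psi_t'/(z - W_t)$, and hence an explicit formula for $\partial_t \log \psi_t'$. Substituting into $\dd/\dd t$ applied to $\int |\psi_t''/\psi_t'|^2\,\dd^2 z$ and applying Green's theorem on $\m H$ with a small semicircle excised around $W_t$ should produce $\dot W_t^2$. Integrating from $0$ to $T$, with $\mc E(0)=0$ since $\psi_0=\Id$, and identifying the limiting maps at $t=T$ (suitably normalized to fix $\infty$) with the $f,g$ of the statement, yields the theorem.

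The main obstacle is controlling the boundary contributions in this integration by parts: the semicircular excision around $W_t$ must produce precisely $\dot W_t^2$, and the boundary terms at $\infty$ in the two complementary-side integrals must cancel. This cancellation is exactly what the hypothesis that $\gamma$ passes through $\infty$ provides --- with both $f,g$ fixing $\infty$, no extra boundary term analogous to the $4\log|f'(0)/g'(\infty)|$ of Theorem~\ref{thm:main} survives. A secondary technical step is the continuity of the area integrals under the truncation $\gamma[\vare,1]$ in \eqref{def:Loewner_energy}, so that the limit $\vare\to 0$ commutes with the area-integral expression on the right.
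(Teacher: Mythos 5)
Your reduction of the second equality is fine, but the core of your plan --- the dissipation identity $\mc E'(t)=\dot W_t^2$ --- has a genuine gap that already appears at the level of the definition of $\mc E(t)$. At any intermediate time $t$, the curve grown so far is a slit whose tip lies in the interior of the domain, so it does not separate the sphere: there is only one complementary component, and the ``analogous term for the complementary side'' does not exist until the curve has closed up at $\infty$. Worse, the term you do write down is identically $+\infty$ for $0<t<T$: near the preimage $x_t$ of the tip, the uniformizing map behaves like $\psi_t(z)\approx \gamma(t)+c\,(z-x_t)^2$, so $\psi_t''/\psi_t'\approx (z-x_t)^{-1}$ and $\int_{\m H}\abs{\psi_t''/\psi_t'}^2\,\dd^2z$ diverges logarithmically at $x_t$, regardless of how smooth the driver is. Hence $\mc E(t)$ cannot be differentiated, the Green's-theorem computation with a semicircle excised at $W_t$ cannot be run as stated, and the finiteness of the right-hand side of the theorem is special to the \emph{completed} curve, where the tip singularity has disappeared. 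Any rescue requires a renormalization --- e.g.\ completing the slit at each time by the curve generated by the constant driver (a geodesic to $\infty$) and tracking how that completion's contribution changes --- and controlling those extra terms is precisely the hard part, not a by-product of integration by parts; moreover the boundary contributions along the two sides of the slit (where $\arg\psi_t'$ is nonconstant), not just at $W_t$ and $\infty$, would have to be handled.

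This is also not the route the paper takes. There, one always works with completed curves of the form $\m R_+\cup\eta$: the identity is first verified by an explicit (if technical) computation when the driving function of $\eta$ is linear on $[0,T]$ and constant afterwards; it is then extended to piecewise linear drivers using an exact additivity of the Dirichlet-type functional under concatenation, which rests on the decomposition $\log\abs{(f\circ g)'}=\log\abs{f'}\circ g+\log\abs{g'}$, conformal invariance of the Dirichlet integral, and the vanishing of the cross term (this is the deterministic ``cutting'' identity $I^L(\gamma)+\mc D_{\m C}(2\varphi)=\mc D_{\m H}(2u)+\mc D_{\m H^*}(2v)$ in disguise); general finite-energy drivers are reached by approximation with piecewise linear ones, and curves through $\infty$ by the further limit $M\to\infty$ of curves containing $[M,\infty]$. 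In integrated form this is the statement you were aiming for, but the mechanism is concatenation plus approximation, not a pointwise-in-$t$ energy balance; your closing remark about continuity under the truncation in \eqref{def:Loewner_energy} gestures at the approximation step but does not supply the additivity lemma or the explicit base case that carry the proof.
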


This theorem can be viewed as a deterministic finite energy analog of the quantum zipper coupling between SLE and Gaussian free field (GFF) \cite{MoT,Dub_couplings} that we now explain. We do not make rigorous statements and only argue heuristically here. 

\emph{Gaussian free field} on a two-dimensional domain is a random real-valued Gaussian Schwartz distribution and the analog of Brownian motion by replacing the time interval by the domain. Similar to Schilder's theorem, the large deviation rate function for $(\sqrt k\Phi)_{\k \to 0\splus}$, where $\Phi$ is a free boundary GFF on a two-dimensional domain $D$, is the associated Dirichlet energy:
  $$\mc D_{D} (\varphi) : = \frac{1}{4\pi} \int_D |\nabla \varphi|^2 \dd^2 z \in [0,\infty], \qquad \forall \varphi \in W^{1,2}_{loc}.$$
  If $\mc D_D (\varphi) <\infty$, we say that $\varphi \in \mc E(D)$. 
A $\sqrt \k$-\emph{quantum surface} is a two-dimensional domain $D$ equipped with a
  Liouville quantum gravity  measure (denoted as $\sqrt \k$-LQG), defined using a regularization of $e^{\sqrt \k \Phi}\dd^2 z$, where $\sqrt \k \in (0,2)$. 

We use the following dictionary illustrating the analogy between the concepts in random conformal geometry (left column) and their finite energy counterparts (right column). 
For a more complete dictionary and background, we refer the readers to \cite{VW1,yilin_survey} and the references therein.

\renewcommand{\arraystretch}{1.06}

\begin{center} 
\begin{longtable}{ l  l  }
 \toprule
    \multicolumn{1}{c}{\bf  SLE/GFF with $\k \to 0+$}  &   \multicolumn{1}{c}{\bf  Finite energy} \\ \hline 
     $\SLE_\kappa$ loop    & Jordan curve $\gamma$ with $I^L(\g) <\infty$ \\
     & i.e.,  a Weil--Petersson quasicircle\\
      \hline
        Free boundary GFF $\sqrt \k \Phi$ on $\mathbb{H}$ (on $\m C$) &  $2u \in \mathcal{E}(\m H)$  ($2\varphi \in \mathcal{E}(\m C)$)   \\ \hline
              $\sqrt \k$-LQG on quantum plane $\approx e^{\sqrt \k \Phi} \dd^2 z$ & measure on $\m C$: $e^{2 \varphi} \,\dd^2 z, \, \varphi \in \mathcal{E}(\m C)$ \\ \hline
  $\sqrt \k$-LQG on quantum half-plane on $\mathbb{H}$ & measure on $\m H$: $e^{2 u} \,\dd^2 z ,  \, u \in \mathcal{E}(\m H)$
  \\ 
    \hline
 Quantum zipper coupling: & A Weil--Petersson quasicircle $\gamma$ cuts $\m C$ \\
     `` $\SLE_\kappa$ cuts an independent quantum & with measure $e^{2\varphi} \dd^2 z$, where $\varphi \in \mathcal{E}(\mathbb{C})$,   \\
       plane $e^{\sqrt \k \Phi} \dd^2 z$ into two independent & into half-planes with measure $e^{2u} \dd^2 z$\\
    quantum half-planes $e^{\sqrt \k \Phi_1}, e^{\sqrt \k \Phi_2}$ ''    &  and $e^{2v}\dd^2 z$  with $u \in \mc E(\m H), v \in \mc E(\m H^*)$, \\
    & and $I^L(\gamma) + \mc{D}_{\m{C}}(2\varphi) = \mc{D}_{\m{H}}(2u) + \mc{D}_{\m{H}^*}(2v)$\\
  \bottomrule 
\end{longtable}
\end{center}

\vspace{-30pt}

 In the last line, two domains $D$ and $D'$ equipped with a measure are considered equivalent if there exist a conformal map $D \to D'$ such that the measure on $D'$ equals the pushforward of the measure on $D$. In particular, if a Jordan curve $\g$ cuts $\m C$ into two domains $H$ and $H^*$ as above and $f$ and $g$ are the conformal maps in Theorem~\ref{thm:infinite_curve_LE},  
then $e^{2\varphi} \dd^2 z$ on $H$ and on $H^*$ are equivalent, respectively, to $e^{2 u} \dd^2 z$ on $\m H$ and  $e^{2 v} \dd^2 z$ on $\m H^*$ where
$$   u =  \varphi \circ f + \log \abs{f'}, \quad v =  \varphi \circ g + \log \abs{g'}.$$
 
The identity  $I^L(\gamma) + \mc{D}_{\m{C}}(2\varphi) = \mc{D}_{\m{H}}(2u) + \mc{D}_{\m{H}^*}(2v)$ is more general than Theorem~\ref{thm:infinite_curve_LE} (which corresponds to the case where $\varphi \equiv 0$) 
and we argue heuristically as follows.  From the quantum zipper coupling, one expects that using an appropriate choice of topology and for small $\k$,
 \begin{align}\label{eq:heuristic_cutting}
 \begin{split}
 `` & \P ( \SLE_\k  \text{ loop stays close to } \gamma,\, \sqrt{\k}\Phi \text{ stays close to } 2 \varphi) \\
 = &\;\P(\sqrt \k \Phi_1 \text{ stays close to } 2 u, \, \sqrt{\k} \Phi_2 \text{ stays close to } 2 v)".    
 \end{split}
 \end{align}
 
  We obtain from the independence between SLE and $\Phi$, the large deviation principle of SLE \eqref{eq:ldp_sle}, and the large deviation principle for GFF 
 \begin{align*}
   `` &  \lim_{\k \to 0} - \k \log  \P(\SLE_\k  \text{ stays close to } \gamma,\, \sqrt{\k} \Phi \text{ stays close to } 2 \varphi) \\ 
     =  & \; \lim_{\k \to 0} - \k  \log  \P(\SLE_\k \text{ stays close to } \gamma) +\lim_{\k \to 0}- \k  \log  \P(\sqrt{\k} \Phi \text{ stays close to } 2 \varphi) \\
     \approx & \; I^L (\gamma) + \mc D_{\m C} (2\varphi)".
 \end{align*}
 
 On the other hand the independence between $\Phi_1$ and $\Phi_2$ gives
 \begin{align*}
`` &  \lim_{\k \to 0} -\k \log  \P(\sqrt{\k} \Phi_1  \text{ stays close to } 2 u,\, \sqrt{\k} \Phi_2 \text{ stays close to }  2 v)\\
\approx   &  \; \mc D_{\m H} (2u) + \mc D_{\m H^*} (2v)".
 \end{align*}
 We obtain the identity  $I^L(\gamma) + \mc{D}_{\m{C}}(2\varphi) = \mc{D}_{\m{H}}(2u) + \mc{D}_{\m{H}^*}(2v)$ from \eqref{eq:heuristic_cutting}.  The formula in Theorem~\ref{thm:infinite_curve_LE} follows by taking $\varphi \equiv 0$.
 
 One technical difficulty to make this argument rigorous lies in choosing the right topologies so that these three equations in quotation marks hold.
 However, once we have guessed the identity in Theorem~\ref{thm:infinite_curve_LE}, the actual proof is purely analytic and straightforward (without mentioning SLE or GFF). We will give the outline of the proof in the next section. 
For interested readers, we remark that starting from Theorem~\ref{thm:main}, more identities around the Loewner energy expanding the dictionary above between random conformal geometry and finite energy objects, are explored in \cite{VW1} and its follow-up papers. The proofs there are also entirely analytic and such a dictionary has provided inspiration for proving theorems on both sides.


\section*{Outline of the proof of Theorem~\ref{thm:main}}

To prove Theorem~\ref{thm:main} (where the Jordan curve does not pass through $\infty$), 
we prove first
Theorem~\ref{thm:infinite_curve_LE} without invoking SLE or GFF as follows.  
\begin{itemize}
    \item We prove first the identity when the curve is of the form of $\m R_+ \cup \eta$, where $\eta$ is a chord in $(\m C \smallsetminus \m R_+; 0,\infty)$ with driving function $W : \m R_+ \to \m R$. More specifically, we treat the following cases:
    \begin{itemize}
        \item when $W_t= a t$, for $t \in  [0,T]$ and $W_t = a T$ for  $t \ge T$ (the computation is technical but straightforward in this case);
        \item when $W$ is piecewise linear by concatenating linear functions;
        \item when $W$ satisfies $I(W) < \infty$ by approximating $W$ by piecewise linear functions.
    \end{itemize}
    \item We deduce the identity for curves of the form $[M, \infty] \cup \eta$  where $\eta$ is a chord in $(\m C \smallsetminus [M, \infty); M,\infty)$. Then we let $M \to \infty$. This proves Theorem~\ref{thm:infinite_curve_LE}.
\end{itemize}

The next step aims at giving a more symmetric description of the Loewner energy by viewing the Jordan curve in the sphere $S^2$, so that the point $\infty$ plays no special role.  For this, we equip $S^2$ with a Riemannian metric $g = e^{2 \varphi} g_0$, conformally equivalent to the round metric $g_0$ (the metric induced from $S^2 \subset \m R^3$). 
Let $\g \subset S^2$ be a \emph{smooth} Jordan curve dividing $S^2$ into  two components $D_1$ and $D_2$. Denote by $\D_{D_i,g}$ the Laplace--Beltrami operator with Dirichlet boundary condition on $(D_i, g)$.
We introduce the functional $\mc H (\cdot, g)$ on the space of smooth Jordan curves:
 \begin{align} \label{eq:def_H}
    \begin{split}
  \mc H (\g, g) : = & \log \detz'(-\D_{S^2, g}) -  \log \vol_g(S^2)  - \log \detz (-\D_{D_1, g}) -  \log \detz (-\D_{D_2,g}),      
    \end{split}
 \end{align}
 where $\detz$ denotes the zeta-regularized determinant.

\begin{thm}[See {\cite[Thm.\,7.3]{W2}}] \label{thm_energy_determinant}
We have the following results:
\begin{enumerate}[(i)]
\item The functional $\mc H$ is conformally invariant, \ie $\mc H (\cdot, g) = \mc H (\cdot, g_0)$;
\item \label{item_energy_determinant}  Let $\g$ be a smooth Jordan curve on $S^2$. 
We have the identity
\begin{align}\label{eq:Loewner_determinant}
I^L(\g)& = 12 \mc H(\g, g) - 12 \mc H(\mc C, g)  = 12 \log \frac{\detz(-\D_{\m D_1, g})\detz(-\D_{\m D_2, g})}{\detz(-\D_{D_1, g}) \detz(-\D_{D_2, g})},
\end{align}
where $\mc C$ is any circle and $\m D_1$ and $\m D_2$ are the two components of the complement of  $\mc C$. 
\end{enumerate}
\end{thm}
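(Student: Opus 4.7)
For part (i), I would apply the Polyakov--Alvarez conformal anomaly formula to each of the four terms defining $\mc H(\gamma, g)$. Writing $g = e^{2\sigma} g_0$, the closed-surface Polyakov formula expresses the anomaly of $\log \detz'(-\Delta_{S^2, g}) - \log \vol_g(S^2)$ as $-\frac{1}{12\pi}\int_{S^2}(|d\sigma|^2_{g_0} + 2 K_{g_0}\sigma)\,dA_{g_0}$, and the Alvarez formula gives the anomaly of $\log \detz(-\Delta_{D_i, g})$ as the analogous bulk integral over $D_i$ together with boundary integrals along $\gamma = \partial D_i$ involving the geodesic curvature and the normal derivative $\partial_{n_i}\sigma$. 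Summing over $i = 1, 2$, the bulk contributions combine into the integral over $S^2 \smallsetminus \gamma$, matching the closed-surface term (since $\gamma$ has zero area) and therefore canceling. The boundary terms from $D_1$ and $D_2$ also cancel pairwise: the outward normals $n_1, n_2$ along $\gamma$ are opposite, so both the geodesic curvature and $\partial_{n_i}\sigma$ switch sign.

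For part (ii), observe that $\log \detz'(-\Delta_{S^2, g}) - \log \vol_g(S^2)$ does not depend on the Jordan curve, so it cancels in the difference $\mc H(\gamma, g) - \mc H(\mc C, g)$ and reduces the identity to the explicit log-ratio of Dirichlet determinants on $D_i$ and $\m D_i$. Using (i), I may take $g = g_0$ to be the round metric. Pick conformal maps $\phi_i : \m D_i \to D_i$ and write $\phi_i^* (g_0|_{D_i}) = e^{2\lambda_i} g_0|_{\m D_i}$. By diffeomorphism invariance, $\detz(-\Delta_{D_i, g_0}) = \detz(-\Delta_{\m D_i, e^{2\lambda_i} g_0})$, and Polyakov--Alvarez applied a second time expresses the log-ratio $\log [\detz(-\Delta_{\m D_i, g_0})/\detz(-\Delta_{D_i, g_0})]$ as $\frac{1}{12\pi}$ times the Liouville action of $\lambda_i$ on $\m D_i$, plus curvature and boundary contributions along the equator $\mc C$.

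It remains to match this with $I^L(\gamma)$ via Theorem~\ref{thm:main}. Since $f$ is holomorphic, $|\nabla \log|f'||^2 = |f''/f'|^2$ pointwise, so after absorbing the Euclidean-versus-round conformal factor $\log(2/(1+|z|^2))$ into $\lambda_i$, the bulk Liouville integrals of $\lambda_i$ match, up to the factor of $12$, the terms $\frac{1}{\pi}\int_{\m D}|f''/f'|^2$ and $\frac{1}{\pi}\int_{\m D^*}|g''/g'|^2$ appearing in $\mathbf{S}(\gamma)$. The remaining geodesic curvature and normal derivative integrals along $\mc C$, together with the delicate treatment of $\phi_2$ near $\infty$, must combine into the log-term $4\log|f'(0)/g'(\infty)|$; the overall constant is fixed by evaluating on $\gamma = \mc C$, where both sides vanish. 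The main obstacle is exactly this bookkeeping: several boundary and curvature integrals from Polyakov--Alvarez must conspire to yield the clean expression of $\mathbf{S}(\gamma)$, with the asymmetric normalization $g(\infty) = \infty$ reflecting the singular behavior of the stereographic chart at $\infty$.
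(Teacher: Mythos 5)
Your part (i) is fine and is essentially the paper's argument: apply Polyakov (for the closed surface, with the $-\log\vol_g$ term absorbing the zero mode) and Polyakov--Alvarez (for the two Dirichlet problems), note that the bulk anomalies over $D_1\cup D_2$ match the $S^2$ anomaly, and that the boundary contributions cancel pairwise because the geodesic curvature and the normal derivative of $\sigma$ change sign when $\gamma$ is viewed from the other side. Likewise, reducing (ii) to the log-ratio of determinants and expressing that ratio through the conformal maps onto $D_i$ via diffeomorphism invariance plus a second application of Polyakov--Alvarez is exactly the tool the paper uses.

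The genuine gap is in how you close part (ii). You propose to ``match this with $I^L(\gamma)$ via Theorem~\ref{thm:main}'', but in this paper Theorem~\ref{thm:main} is \emph{deduced from} Theorem~\ref{thm_energy_determinant} (together with another application of Polyakov--Alvarez and an approximation by smooth curves), so invoking it here is circular: at this stage of the argument no identity between $I^L$ and any conformal-map expression for bounded curves is available. The independent input the paper uses is Theorem~\ref{thm:infinite_curve_LE}, the half-plane identity for curves through $\infty$, which is proved beforehand by the Loewner driving-function computation. The correct route is therefore to choose the stereographic chart (equivalently, use M\"obius invariance of both $I^L$ and $\mc H$) so that $\gamma$ and the reference circle $\mc C$ pass through $\infty$, carry out the Polyakov--Alvarez computation for the maps $\m H\to H$, $\m H^*\to H^*$, and check that the boundary and curvature terms cancel to leave precisely $\frac1\pi\int_{\m H}|f''/f'|^2\,\dd^2z+\frac1\pi\int_{\m H^*}|g''/g'|^2\,\dd^2z$. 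Note also that this cancellation is exactly the step you defer as ``bookkeeping'' (the emergence of $4\log|f'(0)/g'(\infty)|$, or its absence in the half-plane normalization): it is the actual content of the proof of (ii), not a detail that can be fixed afterwards by evaluating both sides at $\gamma=\mc C$, so as written the core computation is missing as well.
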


Let us make some remarks: 
\begin{itemize}
\item  Since the Loewner energy is nonnegative, \ref{item_energy_determinant} implies that circles minimize $\mc H(\cdot, g)$
 among all smooth Jordan curves. 
 \item We assumed the curve $\g$ to be smooth so that $\detz (-\D_{D_i, g})$ is well-defined. This assumption can possibly be weakened. 
 \item The Polyakov--Alvarez formula gives the a formula for the ratio between $\detz(-\D_{\m D_i, g})$ and $\detz(-\D_{D_i, g})$  which involves conformal maps from $\m D_i$ to $D_i$. See, e.g., \cite{W2} for the explicit formula. We deduce the result by comparing \eqref{eq:Loewner_determinant} to the expression in Theorem~\ref{thm:infinite_curve_LE}.
\end{itemize}

Finally, for a smooth Jordan curve which does not pass through $\infty$, we use Theorem~\ref{thm_energy_determinant} and the Polyakov--Alvarez formula again to deduce the identity in Theorem~\ref{thm:main}. The identity for an arbitrary bounded Jordan curve follows from an approximation argument by smooth Jordan curves.


\section*{Further discussions}
We mention that there are other identities between the Loewner energy and mathematical objects that are seemingly unrelated. We now give two examples.  Theorem~\ref{thm:main} suggests there should be a link between SLE and these objects. However, the precise link is not known.

\textbf{Grunsky operator and Coulomb gas.} With each Jordan curve $\g$ is associated a \emph{Grunsky operator} $G_\g$, defined explicitly using the coefficients of a conformal map $g$ from $\m D^*$ to $\O^*$. 
It is a classical object in geometric function theory and  many properties of $\g$ are equivalent to the operator-theoretic properties of  $G_\g$. 
It was shown in \cite{TT06} that $\g$ is Weil--Petersson if and only if $G_\g$ is Hilbert--Schmidt which is equivalent to $\det (\Id - G_\g^* G_\g)$ being well-defined.  More surprisingly, Theorem~\ref{thm:main} and \cite{TT06} together imply that
\begin{equation*}
    I^L(\g) = -12 \log \det (\Id - G_\g^* G_\g).
\end{equation*}
Using this identity, \cite{johansson2023coulomb} showed that the Loewner energy appears in the constant term of large $n$ asymptotics of the free energy of an $n$-particle Coulomb gas confined in $\O$.

\textbf{Renormalized volume.}
We saw that the Loewner energy is invariant under M\"obius transformations of $\Chat$. Since M\"obius transformations extend to isometries of the hyperbolic $3$-space $\m H^3$ (whose boundary at infinity is identified with the Riemann sphere $\Chat$), it is natural to wonder if there is a geometric quantity in $\m H^3$ that is equal to the Loewner energy  (question raised in the pioneering work \cite{Bishop_WP} which also provided qualitative characterizations of Weil--Petersson quasicircles in terms of minimal surfaces in $\m H^3$). 
The answer is positive. It is shown recently in \cite{epstein_yilin} that the Loewner energy of $\g$ equals the renormalized volume of the $3$-manifold $N_\g$ in $\m H^3$ bounded by the two Epstein surfaces $\S$ and $\S^*$ (these are surfaces in $\m H^3$ canonically determined and bounded by $\g$):
\begin{equation*}
    I^L(\g) = \frac{4}{\pi} \left(\vol (N_\g) - \frac12 \int_{\S \cup \S^*} H \dd A \right),
\end{equation*}
where $H$ is the mean curvature on $\S$ and $\S^*$ and $\dd A$ is the area form induced from (the hyperbolic metric in) $\m H^3$. The result uses the fact that $I^L$ is a K\"ahler potential for the Weil--Petersson metric.

\section*{Acknowledgment}

 I am indebted to Wendelin Werner for numerous discussions and to Yuliang Shen for the introduction to Weil--Petersson Teichm\"uller space. I also thank the referees for many comments which improved the presentation of the article.

\bibliographystyle{abbrv}
\bibliography{ref}
\end{document}